\documentclass[11pt]{amsart}

\usepackage{amssymb}
\usepackage{amsmath}
\usepackage{enumerate}
\usepackage[latin1]{inputenc}
\usepackage[dvips]{graphicx}

\def\s{\mathbb{S}}
\def\h{\mathbb{H}}
\def\r{\mathbb{R}}
\def\z{\mathbb{Z}}

\def\p{\mathbb{P}}

\newtheorem{theorem}{Theorem}

\newtheorem{corollary}{Corollary}

\theoremstyle{definition}

\theoremstyle{remark}
  \newtheorem{remark}{Remark}

\numberwithin{equation}{section}

\hyphenation{ mi-ni-mal e-xam-ples de-fi-ni-tion cons-truc-tions
cri-ti-cal bet-ween cons-truc-ted pro-blem ma-ni-folds o-rien-ted
ge-ne-ra-ted iso-me-try ta-king sy-mme-tric geo-me-try
cha-rac-te-ris-tic o-pe-ra-tor}

\begin{document}

\title[Gauss curvature of surfaces in homogeneous 3-manifolds]{On the Gauss curvature of compact surfaces in homogeneous 3-manifolds}

\author{Francisco Torralbo}
\address{Departamento de Geometr\'{\i}a  y Topolog\'{\i}a \\
Universidad de Granada \\
18071 Granada, SPAIN} \email{ftorralbo@ugr.es}

\author{Francisco Urbano}
\address{Departamento de Geometr\'{\i}a  y Topolog\'{\i}a \\
Universidad de Granada \\
18071 Granada, SPAIN} \email{furbano@ugr.es}

\thanks{Research partially supported by a MCyT-Feder research project MTM2007-61775 and a Junta Andalucía  Grant P06-FQM-01642.}

\subjclass[2000]{Primary 53C42; Secondary 53C30}

\keywords{Surfaces, Gauss curvature, homogenous 3-manifolds}


\begin{abstract}
Compact flat surfaces of homogeneous Riemannian $3$-manifolds with isometry group of dimension $4$ are classified. Non-existence results for compact constant Gauss curvature surfaces in these $3$-manifolds are established.
\end{abstract}

\maketitle

\section{Introduction}
The study of complete constant Gauss curvature surfaces in simply-connected 3-manifolds of constant curvature is a classical topic in classical differential geometry. Perhaps one of the nicest reviews about it can be found in Spivak's book~\cite{S}.

During the last lustrum, the surfaces of homogeneous Riemannian 3-manifolds are been deeply studied and particulary those with constant mean curvature. The started point of this study was the work of Abresch and Rosenberg~\cite{AR} where they found out a holomorphic $2$-differential in any constant mean curvature surface of a homogeneous Riemannian $3$-manifold with isometry group of dimension $4$. The Berger $3$-spheres, the Heisenberg group, the Lie group $Sl(2,\r)$ and the Riemannian product $\s^2\times\r$ or $\h^2\times\r$, where $\s^2$ and $\h^2$ are the $2$-dimensional sphere and hyperbolic plane with their standard metrics, are the most significant examples of such homogeneous $3$-manifolds.

In the important and complete work~\cite{AEG}, Aledo, Espinar and Galvez classified the complete constant Gauss curvature surfaces in $\s^2\times\r$ and $\h^2\times\r$ for any value of the Gauss curvature except for two exceptional values, where the classification is still open.

In this paper we start the study of the Gauss curvature of {\it compact} surfaces of homogeneous Riemannian $3$-manifolds with isometry group of dimension $4$. We get a nice integral formula (formula~\eqref{eq:integral-formula}) which allows to classify the flat compact surfaces (Theorem~\ref{thm:flat-compact-surfaces-are-Hopf-tori}). This formula is new in the sense that there is not a similar one for surfaces in 3-dimensional space forms and it shows an important difference between flat compact surfaces in these $3$-manifolds and in $3$-dimensional space forms. For instance, in the Berger spheres the only flat compact surfaces are the Hopf tori, i.e., inverse image of closed curves of $\s^2$ by the Hopf fibration $\Pi:\s^3\rightarrow\s^2$, whereas in the standard $3$-sphere, besides them, there are other flat tori.

Using arguments of topological nature, in Theorem~\ref{thm:compact-surfaces} and Corollary~\ref{cor:compact-surface-constant-curvature}, we study the behavior of the Gauss curvature of this kind of surfaces and we prove non-existence results of compact surfaces with constant Gauss curvature in these homogeneous Riemannian $3$-manifolds.

\section{Preliminaries}
    In this section we follow the notation given in~\cite{D}. Let $N$ be a homogeneous Riemannian 3-manifold whose isometry group have dimension 4. Then there exists a Riemannian submersion $\Pi:N\rightarrow M^2(\kappa)$, where $M^2(\kappa)$ is a 2-dimensional simply connected space form of constant curvature $\kappa$, with totally geodesic fibers and there exists a unit Killing field $\xi$ on $N$ which is vertical with respect to $\Pi$. We will assume that $N$ is oriented, and then we can define a vectorial product $\wedge$, such that if $\{e_1,e_2\}$ are linear independent vectors at a point $p$, then $\{e_1,e_2,e_1\wedge e_2\}$ is the orientation at $p$. If $\bar{\nabla}$ denotes the Riemannian connection on $N$, the properties of $\xi$ imply that (see \cite{D}) for any vector field $V$
\begin{equation}\label{eq:killing-property}
\bar{\nabla}_V\xi=\tau(V\wedge\xi),
\end{equation}
where the constant $\tau$ is the bundle curvature. As the isometry group of $N$ has dimension 4, $\kappa-4\tau^2\not=0$. The case $\kappa-4\tau^2=0$ corresponds to $\s^3$ with its standard metric if $\tau\not=0$ and to the Euclidean space $\r^3$ if $\tau=0$, which have isometry groups of dimension $6$.

 When $\tau=0$, $N$ is the product $M^2(\kappa)\times\r$ or $M^2(\kappa)\times\s^1$, where $\s^1$ is a circle of certain radius.

 When $\tau\not=0$ and $N$ is simply-connected, $N$ is one of the Berger spheres for $\kappa>0$, the Heisenberg group $Nil_3$ for $\kappa=0$ and the universal cover $\widetilde{Sl(2,\r)}$ of the Lie group $Sl(2,\r)$ for $\kappa<0$. The parameters $\kappa,\tau$ give a $2$-parameter family of Berger metrics on $\s^3$ and of homogeneous metrics on $\widetilde{Sl(2,\r)}$. The parameter $\tau$ gives a $1$-parameter family of homogeneous metrics on $Nil_3$.

The real projective space $\r\p^3$ or more generally the Lens spaces $L_n=\s^3/\z_n,\,n\geq 3$ with the induced Berger metrics are also homogeneous Riemannian $3$-manifolds with isometry group of dimension $4$. Also, $PSl(2,\r)=Sl(2,\r)/\{\pm I\}$ or more generally $Sl(2,\r)/\z_n,\,n\geq 3$ are homogeneous Riemannian $3$-manifolds with isometry group of dimension $4$ too. In these cases, the corresponding Riemannian submersions on $M^2(\kappa)$ is by circles.

{\it Along the paper $E(\kappa,\tau)$ will denote an oriented homogeneous Riemannian $3$-manifold with isometry group of dimension $4$, where $\kappa$ is the curvature of the basis, $\tau$ the bundle curvature and then $\kappa-4\tau^2\not=0$.}

Now, let $\Phi:\Sigma\rightarrow E(\kappa,\tau)$ be an immersion of an orientable surface $\Sigma$, and $N$ a unit normal field on $\Sigma$. We define a function $C$ on $\Sigma$ by
\[
C=\langle\xi,N\rangle,
\]
where $\langle,\rangle$ stands for the metric on $E(\kappa,\tau)$ and $\Sigma$.
It is clear that $-1\leq C\leq 1$ and that $\{p\in\Sigma\,|\,C^2(p)=1\}=\{p\in\Sigma\,|\,\xi(p)=\pm N(p)\}$ has empty interior, because the distribution $\langle\xi\rangle^{\bot}$
on $E(\kappa,\tau)$ is not integrable. If $A$ is the shape operator of $\Phi$ associated to $N$ and $K$ is the Gauss curvature of $\Sigma$, then the Gauss equation of $\Phi$ is given by (see~\cite{D})
\begin{equation}\label{eq:gauss-equation}
K=\det\,A+\tau^2+(\kappa-4\tau^2)C^2=2H^2-\frac{|\sigma|^2}{2}+\tau^2+(\kappa-4\tau^2)C^2,
\end{equation}
where $H$ is the mean curvature associated to $N$ and $\sigma$ is the second fundamental form of $\Phi$.

As there exists a Riemannian submersion $\Pi:E(\kappa,\tau)\rightarrow M^2(\kappa)$, we can construct flat surfaces in $E(\kappa,\tau)$ in the following way. Given any regular curve $\gamma$ in $M^2(\kappa)$, $\Pi^{-1}(\gamma)$ is a surface in $E(\kappa,\tau)$ which has to $\xi$ as a tangent vector field, i.e., $C=0$. From~\eqref{eq:killing-property} it follows that $\xi$ is a parallel vector field on $\Sigma$ and hence $\Pi^{-1}(\gamma)$ is a flat surface. We will call to $\Pi^{-1}(\gamma)$ a {\it Hopf surface} of $E(\kappa,\tau)$. If $\gamma$ is a closed curve, $\Pi^{-1}(\gamma)$ is a flat cylinder and moreover if $\Pi$ is a circle Riemannian submersion, $\Pi^{-1}(\gamma)$ is a flat torus.
\section{Statement of results}
We start classifying the compact flat surfaces of $E(\kappa,\tau)$.
\begin{theorem}\label{thm:flat-compact-surfaces-are-Hopf-tori}
The only flat compact surfaces in $E(\kappa,\tau)$ are the Hopf tori. In particular
\begin{itemize}
\item In $M^2(\kappa)\times\r$, $\kappa\not=0$, the Heisenberg group $Nil_3$ and $\widetilde{Sl(2,\r)}$ there are no flat compact surfaces.
\item In $M^2(\kappa)\times\s^1$, $\kappa\not=0$ the only flat compact surfaces are the products $\gamma\times\s^1$, where $\gamma$ is a closed curve in $M^2(\kappa)$.
\item In the Berger spheres, $Sl(2,\r)$ and their quotiens $L_n$ and $Sl(2,\r)/\z_n$ the flat compact surfaces are the Hopf tori.
\end{itemize}
\end{theorem}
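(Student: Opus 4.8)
The plan is to reduce the whole statement to the assertion that the angle function $C=\langle\xi,N\rangle$ vanishes identically, and to prove that through an integral formula obtained by recognizing a natural geometric quantity as a divergence. That Hopf tori are flat was already observed in Section~2, so only the converse is at issue. Once $C\equiv0$ is known, $\xi$ is everywhere tangent to $\Sigma$; since the integral curves of $\xi$ are exactly the fibres of $\Pi$, the surface is $\Pi$-saturated, i.e. $\Sigma=\Pi^{-1}(\gamma)$ with $\gamma=\Pi(\Sigma)$ a curve in $M^2(\kappa)$, so $\Sigma$ is a Hopf surface, and compactness forces $\gamma$ to be closed. The three itemized cases then follow by inspecting the fibres of $\Pi$: when the fibre is a line (in $M^2(\kappa)\times\r$, $Nil_3$ and $\widetilde{Sl(2,\r)}$) the preimage $\Pi^{-1}(\gamma)$ is a noncompact cylinder, so no flat compact surface exists; when the fibre is a circle the preimage is a torus, namely $\gamma\times\s^1$ in the product case and a Hopf torus in the Berger spheres, $Sl(2,\r)$ and their quotients.

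To set up the formula, write $\xi=T+CN$, where $T$ is the component of $\xi$ tangent to $\Sigma$, so that $|T|^2=1-C^2$, and let $J$ denote the rotation by $\pi/2$ on $T\Sigma$ determined by the orientation and $N$. Differentiating $\xi=T+CN$ and using the Killing relation \eqref{eq:killing-property} together with $\bar\nabla_X N=-AX$, the tangential and normal parts separate and yield the structure equation $\nabla_X T=C(AX-\tau JX)$; in particular $\nabla T=C(A-\tau J)$ as an endomorphism of $T\Sigma$. A direct $2\times2$ computation gives $\det(A-\tau J)=\det A+\tau^2$, whence $\det(\nabla T)=C^2(\det A+\tau^2)$. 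Substituting the Gauss equation \eqref{eq:gauss-equation} with $K=0$ turns this into the pointwise identity $\det(\nabla T)=-(\kappa-4\tau^2)\,C^4$.

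The integral formula itself comes from the vector-field identity $\operatorname{div}\big((\operatorname{div}T)\,T-\nabla_T T\big)=2\det(\nabla T)+\operatorname{Ric}(T,T)$, valid on any surface, where $\operatorname{Ric}(T,T)=K_\Sigma|T|^2$ and $K_\Sigma$ is the intrinsic curvature. Flatness is used a second time here, to kill the Ricci term, so the left-hand side is a pure divergence and integrates to zero over the compact $\Sigma$; combined with the previous step this gives $(\kappa-4\tau^2)\int_\Sigma C^4\,dA=0$. Since $\kappa-4\tau^2\neq0$ by hypothesis, $\int_\Sigma C^4\,dA=0$, forcing $C\equiv0$, as required. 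Gauss--Bonnet, giving $\chi(\Sigma)=0$, confirms that the only possible topology is the torus but is not otherwise needed; when $\Sigma$ is nonorientable one simply works with the globally defined field $T$ and the Riemannian density, since $T$ and $C^2$ do not depend on the choice of $N$.

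I expect the main obstacle to be the discovery of the right integrand. The naive route through the maximum principle for $C$ stalls at the points where $C^2=1$, where $T=0$ and the tangent plane is horizontal: such points can a priori occur and are awkward to exclude directly. The decisive observation is that $\det(\nabla T)$ is, up to the Ricci term, a divergence, and that flatness enters twice---once to remove that Ricci term and once through the Gauss equation to make $\det(\nabla T)$ a fixed negative multiple of $C^4$---so that a single application of the divergence theorem bypasses the bad set $\{C^2=1\}$ altogether and produces a sign-definite integral.
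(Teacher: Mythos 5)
Your proposal is correct and follows essentially the same route as the paper: both apply the divergence theorem to the field $\nabla_TT-(\operatorname{div}T)\,T$ built from the tangential part $T$ of $\xi$ (a Bochner-type identity), and both use the Gauss equation to turn the resulting integrand into a nonzero constant multiple of $C^4$, forcing $C\equiv 0$ and hence $\Sigma=\Pi^{-1}(\gamma)$. The only differences are organizational: the paper first proves the identity $\int_{\Sigma}K(1-3C^2)\,dv+2(\kappa-4\tau^2)\int_{\Sigma}C^4\,dv=0$ for \emph{arbitrary} compact surfaces (reused later in the paper) and then sets $K=0$, whereas you impose flatness from the start; also your Ricci term should enter with the opposite sign, which is harmless here since flatness makes it vanish.
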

\begin{remark}
 The result in the Berger spheres contrasts with the case of the round sphere where, besides the Hopf tori, there are other flat tori (see~\cite{S} Chapter IV).
\end{remark}
\begin{proof}
First we are going to prove an integral formula for any compact surface of $E(\kappa,\tau)$. Let $\Phi:\Sigma\rightarrow E(\kappa,\tau)$ be an immersion of an oriented compact surface $\Sigma$ and $X$ the tangential component of the Killing field $\xi$, i.e. $$X=\xi-CN.$$ Then from~\eqref{eq:killing-property} and for any tangent vector $v$ to $\Sigma$ we obtain that
\begin{equation}
\label{eq:covariant-derivative-X}
\nabla_vX=\tau C(v\wedge N)+CAv,
\end{equation}
where $\nabla$ is the Riemannian connection on $\Sigma$. From~\eqref{eq:covariant-derivative-X} we obtain that
\begin{equation}\label{eq:divergence-X}
\text{div}\,X=2CH,\quad d(X^{\beta})(v,w)=2\tau C\langle v\wedge N,w\rangle,
\end{equation}
where $\text{div}$ is the divergence operator and $X^{\beta}$ is the 1-form associated to $X$, i.e., $X^{\beta}(v)=\langle X,v\rangle$. Now the classical Bochner formula says that
\[
\text{div}\,(\nabla_XX)=K|X|^2+\langle \nabla(\text{div}\,X),X\rangle+|\nabla X|^2+\sum_{i=1}^2d(X^{\beta})(\nabla_{e_i}X,e_i),
\]
where $\{e_1,e_2\}$ is an orthonormal basis in $T\Sigma$. Using~\eqref{eq:covariant-derivative-X} and~\eqref{eq:divergence-X} this equation becomes in
\[
\text{div}\,(\nabla_XX)=K(1-C^2)+\langle \nabla(\text{div}\,X),X\rangle+C^2|\sigma|^2-2\tau^2C^2.
\]
From~\eqref{eq:divergence-X} we get that
\[
\text{div}\,(2CHX)=\text{div}((\text{div}\,X)X)=\langle \nabla(\text{div}\,X),X\rangle+(\text{div}\,X)^2.
\]
The last two formulae joint with~\eqref{eq:divergence-X} again gives
\[
\text{div}\,(\nabla_XX-2CHX)=K(1-C^2)-4H^2C^2+C^2|\sigma|^2-2\tau^2C^2.
\]
Using the Gauss equation~\eqref{eq:gauss-equation} in the above formula, we obtain
\[
\text{div}\,(\nabla_XX-2CHX)=K(1-3C^2)+2(\kappa-4\tau^2)C^4.
\]
Finally, using the divergence theorem, the last formula becomes in the following integral formula for any orientable compact surface of $E(\kappa,\tau)$
\begin{equation}
\int_{\Sigma}K(1-3C^2)\,dv+2(\kappa-4\tau^2)\int_{\Sigma}C^4dv=0.
\label{eq:integral-formula}
\end{equation}
Although the function $C$ is only well defined for orientable surfaces, it is clear that $C^2$ is well defined always, even for non orientable surfaces. So  formula~\eqref{eq:integral-formula} is true for any compact surface of $E(\kappa,\tau)$.

Now we prove the Theorem. If $K=0$, \eqref{eq:integral-formula} becomes in
\[
(\kappa-4\tau^2)\int_{\Sigma}C^4dv=0,
\]
which implies that $C=0$ on $\Sigma$. This means that $X=\xi$, and so the vertical Killing field of the fibration $\Pi:E(\kappa,\tau)\rightarrow M^2(\kappa)$ is tangent to $\Sigma$. So $\Pi(\Sigma)$ is a closed curve $\gamma$ in $M^2(\kappa)$ and then $\Pi^{-1}(\gamma)\supset \Sigma$, which proves that $\Sigma$ is the Hopf torus $\Pi^{-1}(\gamma)$.
\end{proof}
\begin{remark}
The above formula~\eqref{eq:integral-formula} is also true for compact surfaces in other $3$-manifolds. In fact, suppose that $\Sigma$ is a compact surface of the Riemannian product $M^2\times\r$ or $M^2\times\s^1$, where $M^2$ is an oriented surface. In this case $\Pi:M^2\times\r\rightarrow M^2$ is the trivial submersion and so $\xi=(0,1)$ is a parallel vector field. Following the proof of~\eqref{eq:integral-formula}, if $X$ is the tangential component of $\xi$, then $\text{div}\,X=2CH$, $dX^{\beta}=0$ and $\text{div}\,(\nabla_XX-2CHX)=K(1-3C^2)+\bar{K}C^4$, where $\bar{K}$ is the Gauss curvature of the surface $M^2$. Now the divergence theorem says that
\[
\int_{\Sigma}(K(1-3C^2)+\bar{K}C^4)\,dv=0.
\]
Hence following the proof of Theorem~\ref{thm:flat-compact-surfaces-are-Hopf-tori} we can prove:
\begin{quote}
\emph{Let $M^2$ be an oriented surface with Gauss curvature $\bar{K}>0$ or $\bar{K}<0$. Then}
\begin{enumerate}
  \item \emph{There exist no compact flat surfaces in $M^2\times\r$.}
  \item \emph{The only compact flat surfaces in $M^2\times\s^1$ are the products $\gamma\times\s^1$ where $\gamma$ is a closed curve in $M^2$.}
\end{enumerate}
\end{quote}
\end{remark}

\begin{corollary}\label{cor:compact-C-constant->Hopf-tori}
The only compact surfaces in $E(\kappa,\tau)$ with $C$ constant are the Hopf tori, for which $C=0$.
\end{corollary}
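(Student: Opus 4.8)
The plan is to feed the hypothesis that $C$ is constant into the integral formula~\eqref{eq:integral-formula} and to combine it with Gauss--Bonnet. Write $C\equiv c$. Since $\{p\in\Sigma\,|\,C^2(p)=1\}$ has empty interior, a constant value $c$ with $c^2=1$ is impossible on a nonempty surface, so necessarily $c^2<1$. I would then look at the tangential part $X=\xi-CN$ of $\xi$, for which $|X|^2=|\xi|^2-C^2=1-c^2>0$; thus $X$ is a \emph{nowhere-vanishing} tangent vector field on $\Sigma$. (Even when $\Sigma$ is non-orientable the product $CN$, and hence $X$, is globally well defined because $C=\langle\xi,N\rangle$ and $N$ change sign simultaneously; in that case one reads the hypothesis as ``$C^2$ constant'', and $C^2$ is exactly what appears in~\eqref{eq:integral-formula}.)

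Next I would invoke a purely topological fact: a compact surface carrying a nowhere-vanishing tangent field has $\chi(\Sigma)=0$, so Gauss--Bonnet gives $\int_\Sigma K\,dv=2\pi\chi(\Sigma)=0$. With $C\equiv c$ constant the factor $1-3C^2$ pulls out of the first integral in~\eqref{eq:integral-formula}, which then reduces to
\[
(1-3c^2)\int_\Sigma K\,dv+2(\kappa-4\tau^2)\,c^4\,\mathrm{Area}(\Sigma)=0.
\]
The first term vanishes by the previous step, and since $\kappa-4\tau^2\neq0$ and $\mathrm{Area}(\Sigma)>0$, this forces $c^4=0$, i.e.\ $c=0$.

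Finally, with $C\equiv0$ one has $X=\xi$, so the vertical Killing field is everywhere tangent to $\Sigma$; arguing exactly as at the end of the proof of Theorem~\ref{thm:flat-compact-surfaces-are-Hopf-tori}, $\Pi(\Sigma)$ is a closed curve $\gamma$ in $M^2(\kappa)$ and $\Sigma=\Pi^{-1}(\gamma)$ is a Hopf torus. I expect the only delicate step to be the passage to $\chi(\Sigma)=0$: it depends on using the empty-interior property to secure the strict inequality $c^2<1$ (so that $X$ never vanishes) and on the observation that $X$ is defined independently of a global choice of normal. Everything after that is an algebraic collapse of the integral formula together with $\kappa-4\tau^2\neq0$.
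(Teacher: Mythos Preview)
Your proposal is correct and follows essentially the same argument as the paper: use $c^2<1$ to make $X$ nowhere vanishing, deduce $\chi(\Sigma)=0$ and hence $\int_\Sigma K\,dv=0$, then collapse the integral formula~\eqref{eq:integral-formula} to $(\kappa-4\tau^2)c^4\,\mathrm{Area}(\Sigma)=0$ and finish as in Theorem~\ref{thm:flat-compact-surfaces-are-Hopf-tori}. The only additions you make are the explicit justification of $c^2<1$ via the empty-interior remark and the orientation-independence of $X$, which the paper leaves implicit.
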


\begin{remark}
This result is not true for non-compact surfaces. In fact, in~\cite{L} Leite constructed for each $K_0\in]-1,0[$ a constant mean curvature embedding of the hyperbolic plane with its standard metric of negative constant curvature $K_0$ in $M^2(-1)\times\r$, with constant function $C=-K_0$.
\end{remark}

\begin{proof}
It is clear that $C^2<1$. Then the tangential component $X$ of the Killing field is a vector field on $\Sigma$ without zeroes. So the Euler-Poincare characteristic $\chi(\Sigma)=0$ and then $\int_{\Sigma}K\,dv=0$. Now, as $C$ is constant, the formula~\eqref{eq:integral-formula} becomes in $(\kappa-4\tau^2)C^4\,\text{Area}\,(\Sigma)=0$, and hence $C=0$. We finish as in the proof of Theorem~\ref{thm:flat-compact-surfaces-are-Hopf-tori}.
\end{proof}

\begin{theorem}\label{thm:compact-surfaces}~
\begin{enumerate}[a)]
  \item \label{thm:compact-surfaces-item:non-exist}
  There exist no compact surfaces in $E(\kappa,\tau)$ with Gauss curvature $K<\min\,\{0,\kappa-4\tau^2\}$.

  \item \label{thm:compact-surfaces-item:K-bound}
  Let $\Phi:\Sigma\rightarrow E(\kappa,\tau)$ be an immersion of a compact surface $\Sigma$ with Gauss curvature $K$.

    \begin{enumerate}[(1)]
    \item If $\kappa-4\tau^2>0$ and $0\leq K<\max\{\kappa-4\tau^2,\tau^2\}$ then $K=0$ and $\Sigma$ is a Hopf torus.
    \item If $\kappa-4\tau^2< K\leq 0$ then $K=0$ and $\Sigma$ is a Hopf torus.
    \item If $\kappa-4\tau^2<0\leq K<\kappa-3\tau^2$ ($\kappa$ must be positive) then $K=0$ and $\Sigma$ is a Hopf torus.
    \end{enumerate}
\end{enumerate}
\end{theorem}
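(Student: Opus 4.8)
The plan is to reduce every assertion to a determination of the Euler characteristic $\chi(\Sigma)$, obtained from Gauss--Bonnet $\int_\Sigma K\,dv=2\pi\chi(\Sigma)$ together with the sign of $K$, and to control $\chi$ by two complementary mechanisms dictated by the Gauss equation \eqref{eq:gauss-equation} rewritten as $\det A=K-\tau^2-(\kappa-4\tau^2)C^2$. Since $C$ and the field $X=\xi-CN$ need a global unit normal, I would first pass to the oriented double cover whenever $\Sigma$ is non-orientable: the hypotheses, $K$, $C^2$ and $\det A$ all lift, $\chi$ merely doubles, and $K\equiv0$ descends, so I may assume $\Sigma$ oriented. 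Throughout write $m=\kappa-4\tau^2$.

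The first mechanism handles negative extrinsic curvature. Whenever the hypotheses force $\det A<0$ at every point, $\Sigma$ has two distinct principal curvatures everywhere, hence no umbilics, so the principal directions form a globally defined line field without singularities, and Poincar\'e--Hopf for line fields gives $\chi(\Sigma)=0$. Since $\det A=K-\tau^2-mC^2$ is affine in $C^2\in[0,1]$, evaluating at the relevant endpoint shows $\det A<0$ everywhere exactly under three of the bounds: in the non-existence statement (for both signs of $m$, since $K<\min\{0,m\}$ gives $\det A<-\tau^2\le0$); in case (1) when $\tau^2\ge m$ (where $K<\tau^2$); and in case (3) (where $m<0$ and $K<\kappa-3\tau^2=m+\tau^2$). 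In the first, $\chi=0$ contradicts the $\chi<0$ forced by $K<0$, yielding non-existence; in the other two, $\chi=0$ gives $\int_\Sigma K=0$, and $K\ge0$ then forces $K\equiv0$, so $\Sigma$ is a Hopf torus by Theorem~\ref{thm:flat-compact-surfaces-are-Hopf-tori}.

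The second mechanism is an index count at the points where the normal is vertical. The tangential Killing field $X=\xi-CN$ vanishes precisely where $C^2=1$. By \eqref{eq:covariant-derivative-X}, writing $v\wedge N=-Jv$ for the rotation $J$ by $\pi/2$ on $T\Sigma$, one has $\nabla_vX=C(A-\tau J)v$; hence at a zero (where $C=\pm1$) the linearization is $\pm(A-\tau J)$, with $\det(A-\tau J)=\det A+\tau^2$, and there $C^2=1$, so the Gauss equation gives $\det A+\tau^2=K-m$. Thus, provided $K\ne m$ at every vertical point, all zeros are nondegenerate of index $\operatorname{sign}(K-m)$, and Poincar\'e--Hopf yields $\chi(\Sigma)=\sum_{C^2=1}\operatorname{sign}(K-m)$. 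This disposes of the two remaining cases: in case (1) with $m\ge\tau^2$ (so $K<m$) every index is $-1$, giving $\chi\le0$, while $K\ge0$ gives $\chi\ge0$; in case (2) ($m<K\le0$) every index is $+1$, giving $\chi\ge0$, while $K\le0$ gives $\chi\le0$. Either way $\chi=0$, whence $\int_\Sigma K=0$, the sign of $K$ forces $K\equiv0$, and Theorem~\ref{thm:flat-compact-surfaces-are-Hopf-tori} identifies $\Sigma$ as a Hopf torus.

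The routine part is the endpoint check of when $\det A<0$ holds throughout; this is exactly what makes the thresholds $\tau^2$, $\max\{m,\tau^2\}$ and $\kappa-3\tau^2$ appear and splits the statement into a negative-extrinsic-curvature regime and a controlled-index regime. The delicate and decisive step is the second mechanism: identifying the linearization of $X$ at its zeros and matching its determinant with $K-m$. Here the strict inequalities in the statement are exactly what is needed: they guarantee $K\ne m$ at every vertical point, so $\det(A-\tau J)\ne0$ there, the set $\{C^2=1\}$ consists of isolated nondegenerate zeros, and the index count is unambiguous; the degenerate threshold $K=m$, at which $X$ could vanish along a curve, is precisely what is excluded. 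I would take particular care with the orientation conventions entering $v\wedge N=-Jv$, since the sign of the index, and hence the direction of each inequality on $\chi$, depends on them.
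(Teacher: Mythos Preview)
Your argument is correct, and for part~b) it matches the paper's proof almost line for line: the same split into a ``$\det A<0$'' regime handled by the principal line fields and a ``$K\ne m$'' regime handled by the index of $X$ at its zeros, with the same determinant computation $\det(\nabla X)_p=\det A+\tau^2=K-(\kappa-4\tau^2)$. One minor difference is that you conclude $\chi(\Sigma)=0$ from the global line field, whereas the paper only argues $\chi(\Sigma)\le 0$ (ruling out the sphere); your stronger conclusion is fine and follows from the standard double-cover trick.

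The genuine difference is in part~a). The paper does \emph{not} use $\det A<0$ there; instead it runs a Morse-theoretic analysis of the function $C$: it shows that under $K<\min\{0,\kappa-4\tau^2\}$ every critical point of $C$ has $C^2=1$ and nondegenerate Hessian with $\det(\nabla^2 C)=(K-(\kappa-4\tau^2))^2>0$, so all critical points are local extrema, forcing $\Sigma$ to be a sphere and contradicting $K<0$ via Gauss--Bonnet. Your route is shorter and more uniform with the rest of the proof: the hypothesis $K<\min\{0,m\}$ indeed forces $\det A=K-\tau^2-mC^2<-\tau^2\le 0$ at every point (check the affine endpoints $C^2=0,1$ for each sign of $m$), so the same principal-line-field argument yields $\chi(\Sigma)=0$, contradicting $\int_\Sigma K<0$. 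The paper's approach, on the other hand, extracts extra structural information (the function $C$ is Morse with only extremal critical points), which is not needed here but is of independent interest.
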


\begin{proof}
First we prove a).
Considering the double oriented cover, if necessary, we can assume that the surface $\Sigma$ is orientable. Suppose that $\Phi:\Sigma\rightarrow E(\kappa,\tau)$ is an immersion of an orientable compact surface with Gauss curvature $K<\min\,\{0,\kappa-4\tau^2\}$. Let $X$ be the tangential component of the Killing field $\xi$.
From~\eqref{eq:killing-property} and for any tangent vector $v$ to $\Sigma$, we get
\[
\tau(v\wedge X)=\sigma(v,X)+\langle\nabla C,v\rangle N,
\]
and so
\begin{equation}
\nabla C=\tau(X\wedge N)-AX.
\label{eq:gradient}
\end{equation}
It is clear that the points $p$ with $C^2(p)=1$, that is $X_p=0$, are critical points of $C$. Suppose that $p$ is a critical point of $C$ with $C^2(p)<1$. As in this case $X_p\not=0$, equation~\eqref{eq:gradient} says that the $AX_p=\tau(X_p\wedge N_p)$  and hence $\det\,A(p)=-\tau^2$. The Gauss equation~\eqref{eq:gauss-equation} implies that $K(p)=(\kappa-4\tau^2)C^2(p)$. As $0\leq C^2(p)\leq 1$ the assumptions on the Gauss curvature mean that such critical point $p$ of $C$ cannot exist. Hence the only critical points of $C$ are those with $C^2(p)=1$.

From~\eqref{eq:gradient} and~\eqref{eq:covariant-derivative-X} it is not difficult to check that the Hessian of $C$ in a critical point $p$, where $C^2(p)=1$ and $X_p=0$, is given by
\begin{eqnarray*}
(\nabla^2C)(v,w)=-C(p)\langle A^2v,w\rangle-\tau^2C(p)\langle v\wedge N_p,w\wedge N_p\rangle\\-\tau C(p)\langle Av\wedge w+Aw\wedge v,N_p\rangle,
\end{eqnarray*}
where $v,w$ are tangent vectors to $\Sigma$ at $p$. Hence, if $\{e_1,e_2\}$ is an orthonormal reference in $T_p\Sigma$ with $Ae_i=\lambda_ie_i,\,i=1,2$, the determinant of the Hessian of $C$ at $p$ is given by
\[
\det(\nabla^2C)(p)=(\lambda_1\lambda_2+\tau^2)^2=(K(p)-(\kappa-4\tau^2))^2>0.
\]
This means that $C$ is a Morse function on $\Sigma$ with only absolute maximum and minimum as critical points. The elementary Morse theory says that $\Sigma $ must be a sphere. As $K$ is negative this contradicts the Gauss-Bonnet theorem. This proves a).

Proof of b). As in the case a), we can assume that our surface is orientable.

 First we suppose that either $\kappa-4\tau^2<K\leq 0$ or $0\leq K<\kappa-4\tau^2$.

 Now let $p$ be a zero of the vector field $X$ defined by $X=\xi-CN$. Then following~\cite{M}, Chapter 6, if $(\nabla X)_p:T_p\Sigma\rightarrow T_p\Sigma$ denotes the derivative of $X$ at the point $p$, then $p$ is a non-degenerate zero of $X$ if $\det\,(\nabla X)_p\not=0$.

From~\eqref{eq:covariant-derivative-X}, if $\{e_1,e_2\}$ is an orthonormal reference of $T_p\Sigma$, then
\[
\langle\nabla_{e_i}X,e_j\rangle=C(p)\langle Ae_i,e_j\rangle-\tau C(p)\langle e_i\wedge e_j,N(p)\rangle,
\]
and so taking into account the Gauss equation~\eqref{eq:gauss-equation},  $\det\,(\nabla X)_p=\det\,A(p)+\tau^2=K(p)-(\kappa-4\tau^2)$. Using the assumptions on the curvature $K$, this proves that such zero $p$ of $X$ is non-degenerate. So $X$ has a finite number of zeroes.

Now as the index of $X$ at a non-degenerate zero $p$ is $1$ (respectively $-1$) if $\det\,(\nabla X)_p>0$ (respectively  $\det\,(\nabla X)_p<0$), we have that if $K>\kappa-4\tau^2$ all the zeroes of $X$ (if they exist) have index $1$, and then the Poincare-Hopf theorem says that the Euler-Poincare characteristic $\chi(\Sigma)$ is $2$ if $X$ has zeroes or $\chi(\Sigma)=0$ if $X$ has no zeroes. As in this case $K\leq 0$, the Gauss-Bonnet theorem says that $K=0$ and so Theorem~\ref{thm:flat-compact-surfaces-are-Hopf-tori} proves that $\Sigma$ is a Hopf torus. This proves (2).

  If $K<\kappa-4\tau^2$, then all the zeroes of $X$ (if they exist) have index $-1$, and then the Poincare-Hopf theorem says that $\chi(\Sigma)$ is negative if $X$ has zeroes or $\chi(\Sigma)=0$ if $X$ has no zeroes. As in this case $K\geq 0$, the Gauss-Bonnet theorem says again that $K=0$ and so $\Sigma$ is a Hopf torus.

Suppose now that $\kappa-4\tau^2>0$ and $0\leq K<\tau^2$.

The Gauss equation of $\Phi$ says that
\[
\tau^2>K=\det\,A+\tau^2+(\kappa-4\tau^2)C^2\geq\det\,A+\tau^2.
\]
Hence $\det\,A<0$ on $\Sigma$ and so if $\lambda_1\geq\lambda_2$ are the principal curvatures of $\Sigma$ associated to $N$, then $\lambda_2<0<\lambda_1$ and hence the tangent bundle $T\Sigma$ is the direct sum of two rank $1$ vector sub-bundles over $\Sigma$, $T\Sigma={D}_1\oplus {D}_2$ define by $D_i(p)=\{v\in T_p\Sigma\,|\,Av=\lambda_i(p)v\}, \,i=1,2$. As a sphere does not admit neither vector fields without zeroes nor $2$-folds covers, on a sphere is imposible such decomposition of its tangent bundle.  Hence, $\chi(\Sigma)\leq 0$. As $K\geq 0$, the Gauss-Bonnet theorem says that $\Sigma$ is flat and Theorem~\ref{thm:flat-compact-surfaces-are-Hopf-tori} that $\Sigma$ is a Hopf torus. This finishes the proof of (1).

 Finally suppose that $\kappa-4\tau^2<0\leq K<\kappa-3\tau^2$. In this case, as $C^2\leq 1$, we have
\[
\kappa-3\tau^2>K=\det\,A+\tau^2+(\kappa-4\tau^2)C^2\geq\det\,A+\kappa-3\tau^2.
\]
Hence $\det\,A<0$ on $\Sigma$ and the above reasoning proves again that $K=0$ and $\Sigma$ is a Hopf torus. This proves (3).
\end{proof}

\begin{corollary}\label{cor:compact-surface-constant-curvature}
Let $\Phi:\Sigma\rightarrow E(\kappa,\tau)$ be an immersion of a compact surface $\Sigma$ with {\bf constant} Gauss curvature $K$.
\begin{enumerate}
  \item If $\kappa-4\tau^2>0$ and $K\in\,]-\infty,\max\,\{\kappa-4\tau^2,\tau^2\}[$, then $K=0$ and $\Sigma$ is a Hopf torus.
  \item If $\kappa - 4\tau^2 < 0 <\kappa-3\tau^2$ and $K\in\,]-\infty,\kappa-3\tau^2[$, then either $K=0$ and $\Sigma$ is a Hopf torus or $K=\kappa-4\tau^2$.
  \item If $\kappa-4\tau^2< 0$ and $K\in\,]-\infty,0]$, then either $K=0$ and $\Sigma$ is a Hopf torus or $K=\kappa-4\tau^2$.
\end{enumerate}
\end{corollary}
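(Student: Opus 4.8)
The plan is to deduce the corollary directly from Theorem~\ref{thm:compact-surfaces}, exploiting that here $K$ is a single fixed real number; it therefore suffices to locate $K$ relative to the thresholds $0$, $\kappa-4\tau^2$, $\tau^2$ and $\kappa-3\tau^2$ and invoke the appropriate clause. The first step is to extract a lower bound from the non-existence statement~\ref{thm:compact-surfaces-item:non-exist}: since no compact surface satisfies $K<\min\{0,\kappa-4\tau^2\}$, the constancy of $K$ forces $K\geq\min\{0,\kappa-4\tau^2\}$ for any surface that actually exists. In case (1) this reads $K\geq 0$, while in cases (2) and (3) it reads $K\geq\kappa-4\tau^2$.

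For case (1), with $\kappa-4\tau^2>0$, the bound $K\geq 0$ together with the hypothesis $K<\max\{\kappa-4\tau^2,\tau^2\}$ places $K$ exactly in the range covered by the first clause of part~\ref{thm:compact-surfaces-item:K-bound}, giving $K=0$ and that $\Sigma$ is a Hopf torus. No other value survives here: if $\max\{\kappa-4\tau^2,\tau^2\}=\kappa-4\tau^2$, then $K=\kappa-4\tau^2$ is excluded by the strict inequality, whereas if the maximum is $\tau^2$, then $K=\kappa-4\tau^2<\tau^2$ is still covered by that same clause (it falls in the subcase treated through $\det A<0$).

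For cases (2) and (3), with $\kappa-4\tau^2<0$, the lower bound is $K\geq\kappa-4\tau^2$. If $K=\kappa-4\tau^2$ we are already at one of the two permitted conclusions and stop; otherwise $K>\kappa-4\tau^2$, and I would split according to the sign of $K$. When $\kappa-4\tau^2<K\leq 0$ the second clause of part~\ref{thm:compact-surfaces-item:K-bound} applies---this already settles case (3)---while when $0\leq K<\kappa-3\tau^2$ (which forces $\kappa>0$) the third clause applies, covering the rest of the range in case (2). In either event $K=0$ and $\Sigma$ is a Hopf torus, so the only possibilities are $K=0$ with $\Sigma$ a Hopf torus, or $K=\kappa-4\tau^2$.

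The only delicate point, and the reason $K=\kappa-4\tau^2$ must be retained as a genuine alternative in cases (2) and (3) rather than discarded, is that the mechanism behind part~\ref{thm:compact-surfaces-item:K-bound} relies on the zeros of the tangential field $X=\xi-CN$ being non-degenerate, i.e. on $\det(\nabla X)_p=K-(\kappa-4\tau^2)\neq 0$, which drives the Poincare-Hopf index count. Exactly at $K=\kappa-4\tau^2$ this determinant vanishes, the zeros of $X$ may be degenerate, and the index argument gives no control over $\chi(\Sigma)$; hence this value cannot be excluded and must appear explicitly in the statement. Everything else amounts to a routine sorting of the constant $K$ into the intervals already handled by Theorem~\ref{thm:compact-surfaces}.
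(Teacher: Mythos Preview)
Your deduction is correct and matches the paper's intent: the corollary is stated without proof there, being an immediate consequence of Theorem~\ref{thm:compact-surfaces} once $K$ is a fixed constant, and your case analysis is exactly the routine sorting the authors leave implicit. The final paragraph explaining \emph{why} $K=\kappa-4\tau^2$ cannot be ruled out is commentary rather than part of the logical argument, but it is accurate and harmless.
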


\begin{remark}
When $\tau=0$, this Corollary was proved in~\cite{AEG}. In fact they classified the complete constant Gauss curvature surfaces of $M^2(\kappa)\times\r$, $\kappa\not=0$, except for $K=\kappa$.
\end{remark}

\end{document}